\newlength{\hdelta}
\newlength{\vdelta}
\theoremstyle{plain}
\newtheorem{theorem}{Theorem}
\newtheorem{lemma}{Lemma}
\newtheorem{proposition}{Proposition}
\theoremstyle{definition}
\newtheorem{remark}{Remark}
\newcommand{\R}{\mathbb{R}}
\newcommand{\mB}{\mathcal{B}}
\newcommand{\mF}{\mathcal{F}}
\newcommand{\E}{E}
\newcommand{\pr}{P}
\newcommand{\lest}{\le_{\rm{st}}}
\newcommand{\leicx}{\le_{\rm{icx}}}
\newcommand{\mailto}[1]{\href{mailto:#1}{\nolinkurl{#1}}}
\begin{document}

\title{Stochastic order characterization of \\ uniform integrability and tightness}

\author{
 Lasse Leskelä\thanks{
 Postal address:
 Department of Mathematics and Statistics,
 University of Jyväskylä,
 PO Box 35,
 40014 University of Jyväskylä, Finland
 Tel: +358 14 260 2728.
 URL: \url{http://www.iki.fi/lsl/} \quad
 Email: \protect\mailto{lasse.leskela@iki.fi}}
 \and
 Matti Vihola\thanks{
 Postal address:
 Department of Mathematics and Statistics,
 University of Jyväskylä,
 PO Box 35,
 40014 University of Jyväskylä, Finland
 Tel: +358 14 260 2713.
 URL: \url{http://iki.fi/mvihola/} \quad
 Email: \protect\mailto{mvihola@iki.fi}}
}

\date{\today}

\maketitle

\begin{abstract}
We show that a family of random variables is uniformly integrable if and only if it is
stochastically bounded in the increasing convex order by an integrable random variable. This
result is complemented by proving analogous statements for the strong stochastic order and for
power-integrable dominating random variables. Especially, we show that whenever a family of random
variables is stochastically bounded by a $p$-integrable random variable for some $p>1$, there is
no distinction between the strong order and the increasing convex order. These results also yield
new characterizations of relative compactness in Wasserstein and Prohorov metrics.
\end{abstract}

\noindent {\bf Keywords:} stochastic order, uniformly integrable, tight, stochastically bounded,
bounded in probability, strong order, increasing convex order, integrated survival function,
Hardy--Littlewood maximal random variable

\vspace{1ex}

\noindent {\bf AMS 2000 Subject Classification:} 60E15,  60B10, 60F25

\vspace{1ex}

\section{Introduction}

Let $\{X_n\}$ be a sequence of random variables such that $X_n \to X$ almost surely. Lebesgue's
classical dominated convergence theorem implies that
\begin{equation}
 \label{eq:ConvergenceL1}
 \E |X_n - X| \to 0
\end{equation}
if there exists an integrable random variable $Y$ such that
\[
 |X_n| \le Y \quad \text{almost surely for all $n$}.
\]
A probabilistic version (e.g.\ \cite[Thm.~9.1]{Thorisson_2000}) of the above condition is to
require that
\begin{equation}
 \label{eq:BoundAS}
 |X_n| \lest Y \quad \text{for all $n$},
\end{equation}
where $\lest$ denotes the strong (a.k.a.\ usual) stochastic order
\cite{Leskela_2010,Muller_Stoyan_2002,Shaked_Shanthikumar_2007}. It is well known that in general
\eqref{eq:BoundAS} is not necessary for \eqref{eq:ConvergenceL1}. We will show that a sharp
characterization can be obtained when the strong stochastic order in~\eqref{eq:BoundAS} is
replaced by a weaker one. Namely, we will show that~\eqref{eq:ConvergenceL1} holds if and only if
there exists an integrable random variable $Y$ such that
\begin{equation}
 \label{eq:BoundICX}
 |X_n| \leicx Y \quad \text{for all $n$},
\end{equation}
where $\leicx$ denotes the increasing convex stochastic order
\cite{Muller_Stoyan_2002,Shaked_Shanthikumar_2007}.

More generally, our first main result, Theorem~\ref{the:ICXBound}, shows that a family of random
variables $\{X_n\}$ is uniformly integrable if and only if~\eqref{eq:BoundICX} holds. For almost
surely convergent random sequences, this result yields the equivalence of~\eqref{eq:ConvergenceL1}
and~\eqref{eq:BoundICX} (e.g. \cite[Prop 4.12]{kallenberg2002}). From the analysis point of view,
the characterization of uniform integrability in terms of the increasing convex order can be seen
as a new way to represent domination in Lebesgue's dominated convergence theorem in the natural
manner. What makes this result important in probability is that convex and increasing convex
orders are intimately connected with the existence of martingales and submartingales with given
marginals (e.g.\ Kellerer~\cite{Kellerer_1972}; Hirsch and Yor \cite{Hirsch_Yor_2010} and
references therein).

The second main result, Theorem~\ref{the:IcxImpliesSt}, shows that when studying whether a family
of random variables is stochastically bounded by a random variable in $L^p$ for some $p>1$, there
is no need to distinguish between the strong order and the increasing convex order. This somewhat
surprising result, which is a consequence of a Hardy--Littlewood maximal inequality, may open new
ways to establishing strong stochastic bounds using tools of convex analysis.

The main results are complemented by Proposition~\ref{the:BoundStrongFinite}, which states that
$\{X_n\}$ is tight if and only if~\eqref{eq:BoundAS} holds for some (almost surely finite) random
variable $Y$. This simple result is probably well known, because `bounded in probability' and
`stochastically bounded' are commonly used as synonyms for `tight'. We have formulated it here
explicitly in order to complete the big picture on various stochastic boundedness relationships in
Theorem~\ref{the:summary}. The implication diagram in Figure~\ref{fig:key} summarizes the findings
and illustrates how these concepts are related to relative compactness with respect to Wasserstein
and Prohorov metrics. Figure~\ref{fig:key} provides a new unified view to earlier studies on
lattice properties of increasing convex and strong stochastic orders (Kertz and
Rösler~\cite{Kertz_Rosler_2000}; Müller and Scarsini~\cite{Muller_Scarsini_2006}).

The rest of this paper is organized as follows.
Section \ref{sec:Definitions} introduces definitions and notation.
Section~\ref{sec:UniformIntegrability} discusses uniform integrability and tightness of
positive random variables, and Section~\ref{sec:Lp} extends the analysis to power-integrable random variables.
Section~\ref{sec:Summary} summarizes the main results in a diagram (Figure~\ref{fig:key})
and presents counterexamples confirming the sharpness of the implications in the diagram.

\section{Definitions and notation}
\label{sec:Definitions}

\subsection{Uniform integrability and tightness}

In general, we shall assume that $\{X_\alpha\}$ is a family of random variables with values in the
$d$-dimensional Euclidean space $\R^d$, indexed by a parameter $\alpha$ taking values in an
arbitrary set. The random variables do not need to be defined on a common probability space;
instead, we assume that each $X_\alpha$ is a random variable defined on some probability space
$(\Omega_\alpha,\mF_\alpha,\pr_\alpha)$. To keep the notation light, we replace $\pr_\alpha$ by
$\pr$, and denote the expectation with respect to $\pr_\alpha$ by $\E$. In addition, we denote by
$\mu_\alpha = \pr \circ X_\alpha^{-1}$ the distribution of $X_\alpha$ defined on the Borel sets of
$\R^d$. When $d=1$, the distribution function of $X_\alpha$ is defined by $F_\alpha(t) =
\pr(X_\alpha \le t)$, and the quantile function by $F_\alpha^{-1}(u) = \inf\{t: F_\alpha(t) \ge
u\}$ for $u \in (0,1)$.

A family of random variables $\{X_\alpha\}$ is called:
\begin{itemize}
 \item \emph{uniformly integrable} if
 $\sup_\alpha \E |X_\alpha| 1(|X_\alpha| > t) \to 0$ as $t \to \infty$,
 \item \emph{uniformly $p$-integrable} if
 $\sup_\alpha \E |X_\alpha|^p 1(|X_\alpha| > t) \to 0$ as $t \to \infty$,
 \item \emph{tight} if $\sup_\alpha \pr( |X_\alpha| > t ) \to 0$ as $t \to \infty$, and
 \item \emph{bounded in $L^p$} if $\sup_\alpha \E |X_\alpha|^p < \infty$.
\end{itemize}

\begin{remark}
All results in this article remain valid also in a slightly more general setting where $\R^d$ is
replaced by a complete separable metric space $S$ satisfying the Heine--Borel property: a subset
of $S$ is compact if and only if it is bounded and closed. In this case we replace the norm of $x$
by $|x| = \rho(x,x_0)$, where $\rho$ is the metric and $x_0$ is an arbitrary reference point in
$S$. For concreteness, we prefer to formulate the results for $\R^d$-valued random variables.
\end{remark}

\subsection{Stochastic orders}

For real-valued random variables $X_1$ and $X_2$, we denote $X_1 \lest X_2$ and say that $X_1$ is
less than $X_2$ in the \emph{strong stochastic order}, or $X_1$ is \emph{st-bounded} by $X_2$, if
$\E \phi(X_1) \le \E \phi(X_2)$ for all increasing\footnote{The symbol $\R_+$ refers to positive
real numbers including zero. In general, we use the terms `positive', `increasing', and `less
than' as synonyms for `nonnegative', `nondecreasing', and `less or equal than'.} measurable
functions $\phi: \R \to \R_+$. It is well known
\cite{Leskela_2010,Muller_Stoyan_2002,Shaked_Shanthikumar_2007} that the following are equivalent
for any real-valued random variables $X_1$ and $X_2$ with distribution functions $F_1$ and $F_2$:
\begin{enumerate}[(i)]
  \item $X_1 \lest X_2$.
  \item $1-F_1(t) \le 1-F_2(t)$ for all $t \in \R$.
  \item $F_1^{-1}(u) \le F_2^{-1}(u)$ for all $u \in (0,1)$.
  \item There exist random variables $\hat X_1$ and $\hat X_2$ defined on a common probability space, distributed according to $X_1$ and $X_2$,
      such that $\hat X_1 \le \hat X_2$ almost surely.
\end{enumerate}

For positive random variables $X_1$ and $X_2$, we denote $X_1 \leicx X_2$ and say that $X_1$ is
less than $X_2$ in the \emph{increasing convex order}, or $X_1$ is \emph{icx-bounded} by $X_2$, if
$\E \phi(X) \le \E \phi(Y)$ for all increasing convex functions $\phi: \R_+ \to \R_+$. A
fundamental characterization (e.g. \cite[Thm.~1.5.7]{Muller_Stoyan_2002}) of increasing convex
stochastic orders states that
\[
 X_1 \leicx X_2 \quad \text{if and only if \quad $H_1(t) \le H_2(t)$ for all $t \ge 0$},
\]
where
\[
 H_i(t) = \E(X_i - t)_+ = \int_t^\infty (1-F_i(t)) \, dt
\]
denotes the \emph{integrated survival function} of $X_i$, and $x_+ = \max(x,0)$. Basic technical
facts about integral survival functions are given in Appendix~\ref{sec:ISF}.

\begin{remark}
Note that if $\E X_2 = \infty$, then any positive random variable is icx-bounded by $X_2$. Therefore
to say that a random variable $X_1$ is icx-bounded by $X_2$ truly makes sense only when $X_2$ is integrable, in which case also $X_1$ is integrable.
\end{remark}

\begin{remark}
The notion of increasing convex order can also be extended to random variables on the full real
line having an integrable right tail. Because the main results of this article concern norms of
random variables, for simplicity we prefer to restrict our terminology to positive random
variables, using `integrable' in place of `having an integrable right tail' etc.
\end{remark}

\section{Uniform integrability and tightness of positive random variables}
\label{sec:UniformIntegrability}

The following theorem, the first main result of this article, provides a sharp characterization of
uniform integrability as stochastic boundedness in the increasing convex order. We say that a
family of positive random variables $\{X_\alpha\}$ is \emph{icx-bounded} by a random variable $Y$
if $X_\alpha \leicx Y$ for all $\alpha$.

\begin{theorem}
\label{the:ICXBound}
The following are equivalent for any family of positive random variables $\{X_\alpha\}$ with
distribution functions $F_\alpha$:
\begin{enumerate}[(i)]
\item \label{pro:ICXBound1} $\{X_\alpha\}$ is icx-bounded by an integrable random variable.
\item \label{pro:ICXBound2} $\{X_\alpha\}$ is uniformly integrable.
\item \label{pro:ICXBound3} $\lim_{t \to \infty} \sup_\alpha \int_t^\infty (1-F_\alpha(u)) \, du =
0$.
\end{enumerate}
\end{theorem}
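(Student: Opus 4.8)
The plan is to prove the cyclic chain of implications (\ref{pro:ICXBound3}) $\Rightarrow$ (\ref{pro:ICXBound2}) $\Rightarrow$ (\ref{pro:ICXBound1}) $\Rightarrow$ (\ref{pro:ICXBound3}). Two of these are essentially bookkeeping, and the real content sits in the construction needed for (\ref{pro:ICXBound2}) $\Rightarrow$ (\ref{pro:ICXBound1}).

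First I would observe that (\ref{pro:ICXBound1}) $\Rightarrow$ (\ref{pro:ICXBound3}) is immediate from the integrated-survival-function characterization of $\leicx$ recalled in Section~\ref{sec:Definitions}: if $X_\alpha \leicx Y$ with $Y$ integrable, then $H_\alpha(t) = \int_t^\infty (1-F_\alpha(u))\,du \le H_Y(t) = \E(Y-t)_+$ for every $t \ge 0$ and every $\alpha$, so $\sup_\alpha H_\alpha(t) \le H_Y(t) \to 0$ as $t \to \infty$ by dominated convergence (or monotone convergence on the complementary integral), using $\E Y < \infty$. Next, (\ref{pro:ICXBound3}) $\Rightarrow$ (\ref{pro:ICXBound2}) is a standard Markov-type estimate: one writes $\E X_\alpha 1(X_\alpha > t)$ in terms of the survival function and compares it to $\int_{t/2}^\infty (1-F_\alpha(u))\,du$ (or more cleanly, bounds $\E X_\alpha 1(X_\alpha > 2t) \le 2 H_\alpha(t)$ via $X_\alpha 1(X_\alpha>2t) \le 2(X_\alpha - t)_+$), so the uniform vanishing of the integrated tails forces the uniform vanishing of the truncated first moments. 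I should also record that (\ref{pro:ICXBound3}) already yields $\sup_\alpha \E X_\alpha = \sup_\alpha H_\alpha(0) < \infty$, i.e.\ $L^1$-boundedness, which will be needed.

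The main step is (\ref{pro:ICXBound2}) $\Rightarrow$ (\ref{pro:ICXBound1}): given a uniformly integrable family I must exhibit a single integrable $Y$ with $H_\alpha(t) \le H_Y(t)$ for all $t$ and all $\alpha$. The natural candidate is built from the upper envelope of the integrated survival functions. Set $G(t) = \sup_\alpha H_\alpha(t)$ for $t \ge 0$; by the previous paragraph $G(0) < \infty$ and $G(t) \to 0$. I would then take $H$ to be the convex minorant-from-above, i.e.\ the smallest decreasing convex function dominating $G$ — equivalently, let $H$ be the decreasing convex hull of $G$ — and check, using the technical facts about integrated survival functions in Appendix~\ref{sec:ISF}, that a nonnegative, decreasing, convex function $H$ on $[0,\infty)$ with $H(t)\to 0$ and finite $H(0)$ is itself the integrated survival function of some integrable random variable $Y$ (concretely, $1 - F_Y(t) = -H'(t^+)$ defines a valid survival function, and $\E Y = H(0)$). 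Since $H \ge G \ge H_\alpha$ pointwise, the characterization of $\leicx$ gives $X_\alpha \leicx Y$ for all $\alpha$, as required. The point of passing to the convex hull is exactly that $G$ need not itself be convex, whereas every integrated survival function is; I expect verifying that the convex hull stays finite at $0$, stays above $G$, and still decreases to $0$ (so that no mass escapes to infinity) to be the one place where uniform integrability — rather than mere $L^1$-boundedness — is genuinely used, and hence the main obstacle.

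A clean alternative for the hard direction, which I would fall back on if the convex-hull bookkeeping gets delicate, is to let $\psi:\R_+\to\R_+$ be an increasing convex function with $\psi(x)/x \to \infty$ such that $\sup_\alpha \E\psi(X_\alpha) < \infty$ (the de la Vallée-Poussin theorem furnishes such a $\psi$, and one may take $\psi$ convex), and then choose $Y$ with distribution determined by $1-F_Y(t) = \min\!\bigl(1, c\,\psi(t)^{-1}\bigr)$ for a suitable constant $c$; a direct computation with integrated survival functions shows $H_\alpha \le H_Y$ uniformly in $\alpha$ and $\E Y < \infty$. Either route closes the cycle and proves the theorem.
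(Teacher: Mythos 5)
Your main route is in substance the paper's proof: the paper also builds the dominating variable by taking the upper envelope $H(t)=\sup_\alpha H_\alpha(t)$ of the integrated survival functions and invoking the appendix characterization (Lemma~\ref{the:IFS}) to realize $H$ as the integrated survival function of an integrable $Y$. Two inaccuracies in your write-up are worth fixing, though neither is fatal. First, the ``main obstacle'' you identify is illusory: a pointwise supremum of convex functions is convex, so $G=\sup_\alpha H_\alpha$ is already convex and decreasing, and your ``smallest decreasing convex function dominating $G$'' is just $G$ itself. No convex-hull step is needed, and consequently there is nothing delicate to check about the hull staying finite at $0$ or decaying to $0$. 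Second, your stated criterion --- that any nonnegative, decreasing, convex $H$ with $H(0)<\infty$ and $H(t)\to 0$ is an integrated survival function --- is false as written: one also needs the slope condition $H(t)+t\ge H(0)$ (equivalently $H'_+\ge -1$), since otherwise $-H'_+$ can exceed $1$ and fails to be a survival function (e.g.\ $H(t)=(1-10t)_+$). This is condition (iii) of Lemma~\ref{the:IFS}; in your application it holds because each $H_\alpha$ satisfies it and the property passes to the supremum, but you must say so.

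Your fallback route via de la Vall\'ee-Poussin is shakier than you suggest: taking $1-F_Y(t)=\min(1,c\,\psi(t)^{-1})$ does not obviously yield an integrable $Y$, since $\psi(t)/t\to\infty$ does not imply $\int^\infty \psi(t)^{-1}\,dt<\infty$ (consider $\psi(t)=t\log t$). That direction would need an upgraded choice of $\psi$ and is best dropped in favour of the envelope argument, which closes the cycle cleanly.
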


\begin{proof}
\eqref{pro:ICXBound1} $\implies$ \eqref{pro:ICXBound2}. Assume that $X_\alpha \leicx X$ for all
$\alpha$, where $X$ has a finite mean. Then $\E (X_\alpha-t)_+ \le \E (X-t)_+$ for all $t \ge 0$.
Note that
\[
 x 1(x > t) \le 2(x-t/2)_+
\]
for all $x$ and $t$. Therefore,
\[
 \E X_\alpha 1(X_\alpha > t) \le 2 \E (X_\alpha - t/2)_+,
\]
which implies that
\[
 \sup_\alpha \E X_\alpha 1(X_\alpha > t) \le 2 \E (X - t/2)_+.
\]
Because $X$ has a finite mean, we see by using dominated convergence that the right side above
tends to zero as $t$ grows, and $\{X_\alpha\}$ is hence uniformly integrable.

\eqref{pro:ICXBound2} $\implies$ \eqref{pro:ICXBound3}. Because
\[
 \int_t^\infty (1-F_\alpha(u)) \, du
 = \E (X_\alpha - t)_+
 \le \E X_\alpha 1(X_\alpha > t)
\]
for all $t \ge 0$ and all $\alpha$, the uniform integrability of $\{X_\alpha\}$
implies~\eqref{pro:ICXBound3}.

\eqref{pro:ICXBound3} $\implies$ \eqref{pro:ICXBound1}. Define $H(t) = \sup_\alpha H_\alpha(t)$,
where $H_\alpha(t) = \E (X_\alpha - t)_+$ is the integrated survival function of $X_\alpha$. Note
that $H_\alpha$ is convex and $H_\alpha(t) + t \ge H_\alpha(0)$ for all $t \ge 0$ by
Lemma~\ref{the:IFS} in Appendix~\ref{sec:ISF}. As a consequence, the same properties are valid for
$H$. Further, $H(t) \to 0$ as $t \to \infty$ by~\eqref{pro:ICXBound3}. Therefore, by
Lemma~\ref{the:IFS} we conclude that $H$ is the integrated survival function of an integrable
positive random variable $X$. By the definition of $H$, it follows that for all $\alpha$,
\[
 \E (X_\alpha - t)_+ = H_\alpha(t) \le H(t) = \E (X-t)_+ \quad \text{for all $t \ge 0$},
\]
which is equivalent to $X_\alpha \leicx X$ \cite[Thm.~1.5.7]{Muller_Stoyan_2002}.
\end{proof}

\begin{remark}
\label{rem:UIAlt}
The role of increasing convex functions in characterizing uniform integrability is also visible in
the de la Vallée-Poussin theorem, which states that a family of random variables $\{X_\alpha\}$ in
$\R^d$ is uniformly integrable if and only if
\[
 \sup_\alpha \E \phi(|X_\alpha|) < \infty
\]
for some increasing convex function $\phi$ such that $\phi(t)/t \to \infty$ as $t \to \infty$ \cite[Prop.~A.2.2]{Ethier_Kurtz_1986}.
\end{remark}

The following simple result characterizes tightness as stochastic boundedness in the strong order.
We say that a family of random variables $\{X_\alpha\}$ is \emph{st-bounded} by a random variable
$Y$ if $X_\alpha \lest Y$ for all $\alpha$. A positive random variable $X$ (which by convention is
allowed to take on the value $\infty$) is called \emph{finite} if $\pr(X < \infty) = 1$.

\begin{proposition}
\label{the:BoundStrongFinite}
The following are equivalent for any family of positive random variables $\{X_\alpha\}$ with
distribution functions $F_\alpha$:
\begin{enumerate}[(i)]
\item \label{pro:BoundStrongFinite1} $\{X_\alpha\}$ is st-bounded by a finite random variable.
\item \label{pro:BoundStrongFinite2} $\{X_\alpha\}$ is tight.
\item \label{pro:BoundStrongFinite3} $\lim_{t \to \infty} \sup_\alpha (1- F_\alpha(t)) = 0$.
\end{enumerate}
\end{proposition}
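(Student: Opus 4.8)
The plan is to prove the cyclic chain of implications $(iii) \implies (ii) \implies (i) \implies (iii)$, mirroring the structure of the proof of Theorem~\ref{the:ICXBound}. The key observation is that, unlike the increasing convex order, the strong stochastic order is characterized directly by comparison of survival functions (item (ii) in the list of equivalences in Section~\ref{sec:Definitions}), so the argument should be more elementary.

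First, $(ii) \iff (iii)$ is essentially a restatement of the definition: tightness of $\{X_\alpha\}$ means $\sup_\alpha \pr(|X_\alpha| > t) \to 0$, and since the $X_\alpha$ are positive, $\pr(X_\alpha > t) = 1 - F_\alpha(t)$ (up to the usual care with whether the inequality is strict, which is harmless in the limit $t \to \infty$). So really only the equivalence with $(i)$ carries content. For $(i) \implies (ii)$: if $X_\alpha \lest Y$ for all $\alpha$ with $Y$ finite, then by the survival-function characterization $1 - F_\alpha(t) \le 1 - F_Y(t) = \pr(Y > t)$ for all $t$, hence $\sup_\alpha \pr(X_\alpha > t) \le \pr(Y > t) \to 0$ as $t \to \infty$ because $Y$ is finite almost surely. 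This is the easy direction.

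The substantive direction is $(iii) \implies (i)$, and here I would construct the dominating variable $Y$ explicitly from its survival function, just as $H$ was built in Theorem~\ref{the:ICXBound}. Set $G(t) = \sup_\alpha (1 - F_\alpha(t))$ for $t \ge 0$ (and $G(t) = 1$ for $t < 0$, say). Then $G$ is decreasing, right-continuous as a supremum of right-continuous decreasing functions --- actually one must be slightly careful, since a supremum of right-continuous functions need not be right-continuous, so I would instead work with $\bar G(t) = \lim_{s \downarrow t} G(s)$, or equivalently define $\bar G(t) = \inf_{s > t} G(s)$, which is automatically right-continuous and decreasing, satisfies $\bar G \ge $ each $1 - F_\alpha$ pointwise (using right-continuity of the $F_\alpha$), takes values in $[0,1]$, and still tends to $0$ as $t \to \infty$ by hypothesis $(iii)$. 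Such a function is the survival function of a genuine (possibly defective, but here non-defective since $\bar G \to 0$) probability distribution on $[0,\infty)$; let $Y$ have this distribution. Then $Y$ is finite a.s.\ precisely because $\bar G(t) \to 0$, and $1 - F_\alpha(t) \le \bar G(t) = 1 - F_Y(t)$ for all $t$, so $X_\alpha \lest Y$ for every $\alpha$ by the survival-function characterization.

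The main obstacle --- a minor one --- is the right-continuity bookkeeping: verifying that the pointwise supremum of survival functions can be regularized to a bona fide survival function without losing the domination property or the vanishing-at-infinity property. This is where the counterpart role of Lemma~\ref{the:IFS} in the icx case is played, but here it is simpler because we do not need convexity; we only need ``decreasing, right-continuous, limit $0$ at $+\infty$, bounded by $1$,'' which are exactly the defining properties of a survival function on $\R_+$. Once $Y$ is in hand the rest is immediate from the equivalences already recorded. I would also remark that, in contrast to Theorem~\ref{the:ICXBound}, no integrability of $Y$ can be expected or is claimed --- finiteness is the best possible, which is consistent with the fact that tightness places no constraint on tails beyond decay to zero.
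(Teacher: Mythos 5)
Your proof is correct and is essentially the paper's: the paper also builds the dominating variable from the pointwise extremal distribution function, working with $F(t)=\inf_\alpha F_\alpha(t)$ rather than with $G(t)=\sup_\alpha(1-F_\alpha(t))=1-F(t)$, which is the same thing. Two small remarks. First, the regularization step you worry about is actually unnecessary: the paper's Lemma~\ref{the:CDF} proves directly that $\inf_\alpha F_\alpha$ \emph{is} right-continuous (pick $\alpha$ with $F_\alpha(t)<F(t)+\epsilon/2$ and use right-continuity of that single $F_\alpha$), so $G$ itself is already a bona fide survival function; your general caution about suprema of right-continuous functions is sound, but monotonicity saves the day here. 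Second, your ``equivalently $\bar G(t)=\inf_{s>t}G(s)$'' is a slip: since $G$ is \emph{decreasing}, $\inf_{s>t}G(s)=\lim_{s\to\infty}G(s)=0$ for every $t$ under hypothesis (iii), which would destroy the domination; the right-limit regularization you intend is $\bar G(t)=\lim_{s\downarrow t}G(s)=\sup_{s>t}G(s)$ (you have transplanted the formula valid for increasing functions). With that formula all of your subsequent claims---monotonicity, right-continuity, $\bar G\ge 1-F_\alpha$ via right-continuity of $F_\alpha$, and $\bar G\le G\to 0$---are correct, and the proof goes through.
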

\begin{proof}
\eqref{pro:BoundStrongFinite1}$\implies$\eqref{pro:BoundStrongFinite2}. Assume that $X_\alpha
\lest X$ for all $\alpha$, where $X$ is a finite random variable. Then
\[
 \sup_\alpha \pr(X_\alpha > t) \le \pr(X > t).
\]
Because the right side above tends to zero as $t \to \infty$, tightness follows.

\eqref{pro:BoundStrongFinite2}$\implies$\eqref{pro:BoundStrongFinite3}. Clear.

\eqref{pro:BoundStrongFinite3}$\implies$\eqref{pro:BoundStrongFinite1}. Let $F(t) = \inf_\alpha
F_\alpha(t)$. Then Lemma~\ref{the:CDF} in Appendix~\ref{sec:DF} shows that $F$ is a distribution
function of a finite positive random variable $X_*$. Because $1 - F_\alpha(t) \le 1 - F(t)$ for
all $t$ and all $\alpha$, it follows that $X_\alpha \lest X_*$ for all $\alpha$. \end{proof}

\begin{remark}
\label{rem:TightAlt}
An alternative equivalent characterization of tightness, analogous to Remark~\ref{rem:UIAlt}, is
the following: A family of random variables $\{X_\alpha\}$ in $\R^d$ is tight if and only if
\[
 \sup_\alpha \E \phi(|X_\alpha|) < \infty
\]
for some positive function $\phi$ such that $\phi(t) \to \infty$ as $t \to \infty$
\cite[Lem.~D.5.3]{Meyn_Tweedie_1993}.
\end{remark}

\section{Stochastic boundedness of power-integrable random variables}
\label{sec:Lp}

\subsection{Boundedness in strong vs.\ increasing convex order}

If a family of positive random variables is st-bounded by a $p$-integrable random variable for
some $p>0$, then the same is obviously true when `st-bounded' is replaced by `icx-bounded'. The
following result shows that, rather surprisingly, these two properties are equivalent when $p>1$.

\begin{theorem}
\label{the:IcxImpliesSt}
For any $p>1$, a family of positive random variables $\{X_\alpha\}$ is icx-bounded by a
$p$-integrable random variable if and only if $\{X_\alpha\}$ is st-bounded by a $p$-integrable
random variable.
\end{theorem}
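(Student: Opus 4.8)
The nontrivial direction is to show that icx-boundedness by a $p$-integrable random variable implies st-boundedness by a (possibly different) $p$-integrable random variable, the converse being immediate since $X \lest Y$ implies $X \leicx Y$. So assume $X_\alpha \leicx Y$ for all $\alpha$ with $\E Y^p < \infty$, $p > 1$. The plan is to build a single finite random variable $Z$ that dominates every $X_\alpha$ in the strong order and has $\E Z^p < \infty$. Following Proposition~\ref{the:BoundStrongFinite}, the natural candidate is governed by the pointwise supremum of survival functions: let $G(t) = \sup_\alpha \pr(X_\alpha > t)$ and take $Z$ to be a positive random variable with $\pr(Z > t) = G(t)$ (after the usual right-continuous modification, which is legitimate by Lemma~\ref{the:CDF} in Appendix~\ref{sec:DF} once we check $G(t) \to 0$, and this follows from Theorem~\ref{the:ICXBound} since icx-boundedness by an integrable variable gives uniform integrability, hence tightness). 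By construction $X_\alpha \lest Z$ for every $\alpha$, so the whole content of the theorem is the quantitative estimate $\E Z^p < \infty$.

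The key step, and the source of the ``somewhat surprising'' character of the result, is to bound $G$ in terms of $Y$ using a Hardy--Littlewood-type maximal inequality. The point is that $X_\alpha \leicx Y$ means $\E(X_\alpha - t)_+ \le \E(Y-t)_+ =: H(t)$ for all $t \ge 0$, i.e.\ $\int_t^\infty \pr(X_\alpha > u)\,du \le H(t)$. This controls tail \emph{integrals} of $X_\alpha$ uniformly, but not tail \emph{values}; however, combining it with the elementary bound $s\,\pr(X_\alpha > t+s) \le \int_t^{t+s}\pr(X_\alpha>u)\,du \le H(t)$ and optimizing over the split gives a pointwise bound of the form $\pr(X_\alpha > t) \le \inf_{0 \le s < t} H(s)/(t-s)$, hence $G(t) \le \inf_{0\le s<t} H(s)/(t-s)$. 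Thus the survival function of $Z$ is dominated by the \emph{Hardy--Littlewood maximal function} of the survival function of $Y$: writing $\bar F_Y(u) = \pr(Y > u)$, one gets $\pr(Z > t) \le \sup_{r \le t} \frac{1}{t-r}\int_r^t \bar F_Y(u)\,du$ (or a dyadic variant thereof), which is exactly the setting where the Hardy--Littlewood maximal inequality applies.

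From there the argument is a standard maximal-function computation. The Hardy--Littlewood maximal operator is bounded on $L^q(0,\infty)$ for every $q > 1$; since $\E Y^p = p\int_0^\infty t^{p-1}\bar F_Y(t)\,dt < \infty$ is a weighted $L^1$ statement, I would either (a) pass to the function $u \mapsto u\,\bar F_Y(u)^{1/p}$ or a similar reparametrization so that the weight $t^{p-1}$ is absorbed and a genuine $L^q$ maximal bound with $q$ slightly below $p$ applies, or more cleanly (b) invoke the weighted maximal inequality with the power weight $t^{p-1}\,dt$, which lies in the Muckenhoupt class $A_q$ for the relevant exponents precisely when $p > 1$ — this is where the hypothesis $p>1$ is genuinely used, matching the counterexample the authors promise for $p=1$. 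Either way one concludes $\int_0^\infty t^{p-1}\,\pr(Z>t)\,dt \lesssim_p \int_0^\infty t^{p-1}\bar F_Y(t)\,dt < \infty$, i.e.\ $\E Z^p < \infty$, and combined with $X_\alpha \lest Z$ this finishes the proof.

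\textbf{Main obstacle.} The delicate point is handling the $p$-th moment as a weighted integral of the survival function and getting the maximal inequality to respect the weight $t^{p-1}$; one must be careful that the maximal function is taken over intervals reaching down toward $0$, where the weight degenerates, so a direct $A_q$ argument or a clean change of variables is needed rather than the bare $L^q$ boundedness of the maximal operator. A secondary technical nuisance is that $G$ may fail to be right-continuous or may have a jump at $0$, so one must apply Lemma~\ref{the:CDF} to its right-continuous modification and check that this does not spoil the strong-order domination (it does not, since the modification only decreases the survival function).
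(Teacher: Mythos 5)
Your overall strategy is sound and genuinely different from the paper's. The paper proves the hard direction by passing to the Hardy--Littlewood maximal random variable $Y^*=MF_Y^{-1}(U)$ built from the \emph{quantile} function: Kertz and R\"osler's equivalence $X\leicx Y \Leftrightarrow X^*\lest Y^*$ together with $X\lest X^*$ shows every $X_\alpha$ is st-bounded by $Y^*$, and $\E (Y^*)^p\le (p/(p-1))^p\,\E Y^p$ by the classical maximal inequality on $L^p(0,1)$. You instead take the least strong upper bound $Z$ with $\pr(Z>t)=G(t):=\sup_\alpha\pr(X_\alpha>t)$ (note $1-G=\inf_\alpha F_\alpha$ is already right-continuous by Lemma~\ref{the:CDF} once tightness is known, so no modification is needed), and your pointwise bound $G(t)\le\inf_{0\le s<t}H(s)/(t-s)$ with $H(s)=\E(Y-s)_+$ is correct. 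That bound already finishes the proof elementarily: taking $s=t/2$ gives $G(t)\le 2H(t/2)/t$, whence
\[
 \E Z^p=p\int_0^\infty t^{p-1}G(t)\,dt\le 2p\int_0^\infty t^{p-2}H(t/2)\,dt=\frac{2^p}{p-1}\,\E Y^p,
\]
using the Fubini identity $\int_0^\infty u^{p-2}H(u)\,du=\E Y^p/(p(p-1))$ (equivalently Lemma~\ref{the:MomentIFS} after a change of variables); this is exactly where $p>1$ enters. Done this way, your route is self-contained, avoids the Kertz--R\"osler lemma entirely, and is arguably more elementary than the paper's.

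Where the write-up goes astray is the proposed finish via maximal-operator theory. First, the claimed domination $\pr(Z>t)\le\sup_{r<t}\frac{1}{t-r}\int_r^t\pr(Y>u)\,du$ does not follow from your inequality: $H(s)/(t-s)$ equals that truncated average \emph{plus} the tail term $H(t)/(t-s)$, which does not vanish. Second, the quantity to be controlled, $\int_0^\infty t^{p-1}G(t)\,dt$, is a weighted $L^1$ norm, whereas $A_q$ conditions and $L^q$ boundedness of the maximal operator address weighted $L^q$ norms with $q>1$; the power weight $t^{p-1}$ with $p>1$ is not an $A_1$ weight, so the maximal operator is not bounded on $L^1(t^{p-1}\,dt)$ and route (b) as stated would fail, while route (a) is too vague to assess. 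Fortunately, as shown above, none of that machinery is needed.
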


To prove this result, we will apply Hardy--Littlewood maximal functions. Let $X$ be an integrable
random variable with distribution function $F$, and denote the quantile function of $X$ by
$F^{-1}(u) = \inf\{t: F(t) \ge u\}$ for $u \in (0,1)$. Let $U$ be a uniformly distributed random
variable in $(0,1)$ and define
\[
 X^* = MF^{-1}(U),
\]
where
\[
 MF^{-1}(u)
 = \sup_{u<w<1} (w-u)^{-1} \int_u^w F^{-1}(v) \, dv
 = (1-u)^{-1} \int_u^1 F^{-1}(v) \,
 dv
\]
is the Hardy--Littlewood maximal function of $F^{-1}$ \cite{Hardy_Littlewood_1930}. The function
$MF^{-1}$ is well-defined when $X$ is integrable~\cite{Dubins_Gilat_1978,Kertz_Rosler_2000}. We
call $X^*$ the \emph{Hardy--Littlewood maximal random variable} associated with $X$. A remarkable
result of Kertz and Rösler~\cite[Lem.~4.1]{Kertz_Rosler_2000} states that for any integrable
random variables $X$ and $Y$,
\begin{equation}
 \label{eq:MaximalFunctionEquivalence}
 X \leicx Y \quad \text{if and only if} \quad X^* \lest Y^*.
\end{equation}

\begin{lemma}
\label{the:MaximalRV}
$X \lest X^*$ for any integrable random variable $X$.
\end{lemma}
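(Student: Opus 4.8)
The plan is to show that $F^{-1}(u) \le MF^{-1}(u)$ for all $u \in (0,1)$, since this inequality between quantile functions is precisely characterization (iii) of the strong stochastic order, and $F^{-1}$ is the quantile function of $X$ while $MF^{-1}$ is the quantile function of $X^* = MF^{-1}(U)$ (the latter because $MF^{-1}$ is nondecreasing, so evaluating it at the uniform $U$ produces a random variable whose quantile function is $MF^{-1}$ itself). Thus everything reduces to a pointwise estimate.

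First I would recall the closed form
\[
 MF^{-1}(u) = (1-u)^{-1} \int_u^1 F^{-1}(v)\,dv,
\]
which is valid for integrable $X$ by the cited results of Dubins--Gilat and Kertz--R�sler. Then the key observation is simply that $F^{-1}$ is nondecreasing: for every $v \in (u,1)$ we have $F^{-1}(v) \ge F^{-1}(u)$, hence
\[
 \int_u^1 F^{-1}(v)\,dv \ge \int_u^1 F^{-1}(u)\,dv = (1-u) F^{-1}(u),
\]
and dividing by $1-u$ gives $MF^{-1}(u) \ge F^{-1}(u)$. This holds for all $u \in (0,1)$, which is exactly condition (iii) characterizing $X \lest X^*$.

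The only real point requiring a little care is the claim that the quantile function of $X^*$ equals $MF^{-1}$. Here I would note that $MF^{-1}$ is nondecreasing on $(0,1)$ (this is a standard property of Hardy--Littlewood maximal functions of monotone functions, and also follows from the averaging representation), and that for a nondecreasing left-continuous-or-not function $g$ and a uniform $U$, the random variable $g(U)$ has quantile function (the left-continuous version of) $g$; since applying the left-continuous modification changes $g$ only on a countable set, the inequality $F^{-1} \le MF^{-1}$ is preserved, and characterization (iii) still applies. I expect this bookkeeping about versions of monotone functions to be the main (mild) obstacle; the analytic inequality itself is immediate. Alternatively, and perhaps more cleanly, one can avoid quantile functions entirely by invoking characterization (iv): on the probability space of $U$, set $\hat X = F^{-1}(U)$ and $\hat X^* = MF^{-1}(U)$; then $\hat X$ is distributed as $X$, $\hat X^*$ is distributed as $X^*$, and $\hat X \le \hat X^*$ almost surely by the pointwise bound above, so $X \lest X^*$.
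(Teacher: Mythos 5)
Your proof is correct and follows essentially the same route as the paper: the pointwise bound $F^{-1}(u) \le MF^{-1}(u)$ from the monotonicity of $F^{-1}$, combined with the quantile-function characterization of $\lest$. The extra care about left-continuous versions (or the alternative coupling via characterization (iv)) is a harmless elaboration of the same argument.
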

\begin{proof}
Observe that the quantile function of $X^*$ equals $MF^{-1}$. Further, the monotonicity of
$F^{-1}$ shows that $F^{-1}(u) \le MF^{-1}(u)$ for all $u \in (0,1)$. This pointwise ordering of
the quantile functions is a well-known necessary and sufficient condition for the strong ordering
of the corresponding random variables (e.g.\ \cite[Thm.~1.2.4]{Muller_Stoyan_2002}).
\end{proof}

\begin{proof}[Proof of Theorem~\ref{the:IcxImpliesSt}]
Assume that $X_\alpha \leicx Y$ for all $\alpha$, where $Y \in L^p$. Then $X_\alpha^* \lest Y^*$
for all $\alpha$ due to~\eqref{eq:MaximalFunctionEquivalence}, so by applying
Lemma~\ref{the:MaximalRV} we see that the family $\{X_\alpha\}$ is st-bounded by $Y^*$. It remains
to verify that $Y^*$ is in $L^p$. Note that the quantile function of $Y^*$ equals $MF_Y^{-1}$,
where $F_Y^{-1}$ is the quantile function of $Y$. A classic inequality of Hardy and
Littlewood~\cite{Hardy_Littlewood_1930} now implies that
\[
 \E (Y^*)^p
 = \int_0^1 \left( MF_Y^{-1}(u) \right)^p \, du
 \le c \int_0^1 \left( F_Y^{-1}(u) \right)^p \, du
 = c \E Y^p,
\]
where $c= \left( \frac{p}{p-1} \right)^p$.
\end{proof}

\subsection{Power-integrable strong bounds}

The following result shows how to reduce the study of strong boundedness by a $p$-integrable
random variable to strong boundedness by an integrable random variable. A corresponding statement
is not true for the increasing convex ordering, see Theorem~\ref{the:summary} in
Section~\ref{sec:Summary}.

\begin{proposition}
\label{the:BoundStrongIntegrable}
For any $p>0$ and any family of positive random variables $\{X_\alpha\}$ with distribution
functions $F_\alpha$, the following are equivalent:
\begin{enumerate}[(i)]
\item \label{pro:BoundStrongIntegrable1} $\{X_\alpha\}$ is st-bounded by a $p$-integrable random variable.
\item \label{pro:BoundStrongIntegrable2} $\{X_\alpha^p\}$ is st-bounded by an integrable random variable.
\item \label{pro:BoundStrongIntegrable3} $\int_0^\infty \sup_\alpha (1-F_\alpha(t^{1/p})) \, dt < \infty$.
\end{enumerate}
\end{proposition}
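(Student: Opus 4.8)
The plan is to establish the cycle $\eqref{pro:BoundStrongIntegrable1} \implies \eqref{pro:BoundStrongIntegrable3} \implies \eqref{pro:BoundStrongIntegrable2} \implies \eqref{pro:BoundStrongIntegrable1}$, exploiting throughout the basic fact that $X_\alpha \lest Y$ is equivalent to the pointwise inequality $1 - F_\alpha(t) \le 1 - G(t)$ for all $t$, where $G$ is the distribution function of $Y$, together with the observation that $X_\alpha \lest Y$ holds if and only if $X_\alpha^p \lest Y^p$ (since $x \mapsto x^p$ is an increasing bijection of $\R_+$, so it preserves the pointwise quantile or survival function ordering). This last equivalence is really the conceptual heart: it immediately ties the ``$p$-integrable strong bound'' for $\{X_\alpha\}$ to an ``integrable strong bound'' for $\{X_\alpha^p\}$, because $Y \in L^p$ iff $Y^p \in L^1$.

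First I would prove $\eqref{pro:BoundStrongIntegrable1} \implies \eqref{pro:BoundStrongIntegrable3}$. Assuming $X_\alpha \lest Y$ with $Y$ $p$-integrable, we get $1 - F_\alpha(s) \le 1 - G(s)$ for all $s \ge 0$, hence $\sup_\alpha(1 - F_\alpha(t^{1/p})) \le 1 - G(t^{1/p})$. Integrating over $t \in (0,\infty)$ and substituting $t = s^p$ gives $\int_0^\infty (1 - G(t^{1/p}))\,dt = \int_0^\infty (1-G(s))\, p s^{p-1}\, ds = \E Y^p < \infty$, which is exactly the layer-cake identity $\E Y^p = \int_0^\infty \pr(Y^p > t)\, dt$. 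So $\eqref{pro:BoundStrongIntegrable3}$ holds.

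Next, $\eqref{pro:BoundStrongIntegrable3} \implies \eqref{pro:BoundStrongIntegrable2}$. Here I would mimic the construction used in the proof of Proposition~\ref{the:BoundStrongFinite}: set $F(t) = \inf_\alpha F_\alpha(t)$, so that $F_\alpha(t) \ge F(t)$ and hence $1 - F_\alpha(t) \le 1 - F(t) = \sup_\alpha(1 - F_\alpha(t))$ for all $t$. By Lemma~\ref{the:CDF} in Appendix~\ref{sec:DF}, $F$ is the distribution function of a positive (possibly infinite) random variable $X_*$; condition~\eqref{pro:BoundStrongIntegrable3} says precisely that $\int_0^\infty (1 - F(t^{1/p}))\, dt < \infty$, i.e. $\int_0^\infty \pr(X_*^p > t)\, dt < \infty$, so $X_*^p$ is integrable (in particular $X_* < \infty$ a.s.). Since $1 - F_\alpha(t) \le 1 - F(t)$ for all $t$ means $X_\alpha \lest X_*$, applying the increasing map $x \mapsto x^p$ gives $X_\alpha^p \lest X_*^p$ for all $\alpha$, and $X_*^p$ is the desired integrable dominating random variable. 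Finally $\eqref{pro:BoundStrongIntegrable2} \implies \eqref{pro:BoundStrongIntegrable1}$ is immediate: if $X_\alpha^p \lest Z$ with $Z$ integrable, write $Y = Z^{1/p}$; then $Z = Y^p$ with $Y$ positive, $Y \in L^p$, and $X_\alpha^p \lest Y^p$ yields $X_\alpha \lest Y$ by applying the increasing map $x \mapsto x^{1/p}$.

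I do not expect a serious obstacle here; the only points requiring a little care are verifying that $x \mapsto x^p$ (and its inverse) genuinely transports the strong order — this follows because for an increasing bijection $g$ of $\R_+$ one has $\pr(g(X_\alpha) > t) = \pr(X_\alpha > g^{-1}(t))$, so the survival-function characterization (ii) of $\lest$ is preserved — and confirming that the Appendix lemma on infima of distribution functions applies and that $\eqref{pro:BoundStrongIntegrable3}$ is literally the statement $\E X_*^p < \infty$ via the layer-cake formula. The substitution $t = s^p$ in the integrals is routine. If one prefers, one can equally run the argument through quantile functions using characterization (iii) of $\lest$, since $g \circ F_\alpha^{-1}$ is the quantile function of $g(X_\alpha)$ for increasing left-continuous $g$; either route works and neither presents difficulty.
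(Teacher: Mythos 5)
Your proof is correct and uses essentially the same ingredients as the paper's: the survival-function characterization of $\lest$, the fact that the increasing bijection $x\mapsto x^p$ transports the strong order, the infimum-of-distribution-functions construction via Lemma~\ref{the:CDF}, and the layer-cake identity $\E Z^p=\int_0^\infty(1-F(t^{1/p}))\,dt$. The only difference is organizational (you prove a single cycle (i)$\Rightarrow$(iii)$\Rightarrow$(ii)$\Rightarrow$(i) rather than the paper's two separate equivalences), which does not change the substance.
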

\begin{proof}
\eqref{pro:BoundStrongIntegrable1}$\implies$\eqref{pro:BoundStrongIntegrable2}. Assume that
$X_\alpha \lest Y$ for all $\alpha$, for some $p$-integrable random variable $Y$. Because the
random variables $X_\alpha$ are positive, it follows that $Y \ge 0$ almost surely. Because $x
\mapsto x^p$ is increasing on $\R_+$, it follows that $X^p_\alpha \lest Y^p$ for all $\alpha$.

\eqref{pro:BoundStrongIntegrable2}$\implies$\eqref{pro:BoundStrongIntegrable1}. The same argument
as above works also in this direction, because $x \mapsto x^{1/p}$ is increasing.

\eqref{pro:BoundStrongIntegrable1}$\implies$\eqref{pro:BoundStrongIntegrable3}. Assume that
$X_\alpha \lest Y$ for all $\alpha$, for some $p$-integrable random variable $Y \ge 0$. Define
$F(t) = \inf_\alpha F_\alpha(t)$. Proposition~\ref{the:BoundStrongFinite} implies that $\lim_{t
\to \infty} F(t) = 1$, so by Lemma~\ref{the:CDF} in Appendix~\ref{sec:DF}, we see that $F$ is a
distribution function of a finite positive random variable $Z$. Because $Z \lest X_\alpha$ for all
$\alpha$, it follows that $Z \lest Y$. The monotonicity of $x \mapsto x^p$ further implies that
$\E Z^p \le \E Y^p < \infty$. Therefore,
\[
 \int_0^\infty \sup_\alpha (1-F_\alpha(t^{1/p})) \, dt
 = \int_0^\infty (1-F(t^{1/p})) \, dt
 = \E Z^p < \infty.
\]

\eqref{pro:BoundStrongIntegrable3}$\implies$\eqref{pro:BoundStrongIntegrable1}. Define $F(t) =
\inf_\alpha F_\alpha(t)$, and observe that $\lim_{t \to \infty} F(t) = 1$
by~\eqref{pro:BoundStrongIntegrable3}. Again, by Lemma~\ref{the:CDF}, we see that $F$ is the
distribution function of some finite positive random variable $Z$, and $X_\alpha \lest Z$ for all
$\alpha$. Further, the formula
\[
 \E Z^p = \int_0^\infty (1-F(t^{1/p})) \, dt
\]
shows that $Z$ is $p$-integrable. \end{proof}

We conclude this section by a simple corollary of the above result.

\begin{proposition}
\label{the:PowerBoundedImpliesStBounded}
Let $0<p<q$. If a family of positive random variables $\{X_\alpha\}$ is bounded in $L^q$, then it
is st-bounded by a $p$-integrable random variable.
\end{proposition}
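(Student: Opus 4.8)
The plan is to reduce Proposition~\ref{the:PowerBoundedImpliesStBounded} to Proposition~\ref{the:BoundStrongIntegrable}. Suppose $\{X_\alpha\}$ is bounded in $L^q$, say $\sup_\alpha \E X_\alpha^q = C < \infty$. Since $0 < p < q$, I will show condition~\eqref{pro:BoundStrongIntegrable3} of Proposition~\ref{the:BoundStrongIntegrable} holds, namely that $\int_0^\infty \sup_\alpha (1 - F_\alpha(t^{1/p})) \, dt < \infty$; that proposition then immediately gives that $\{X_\alpha\}$ is st-bounded by a $p$-integrable random variable.

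The workhorse is Markov's inequality: for $s > 0$ and every $\alpha$,
\[
 1 - F_\alpha(s) = \pr(X_\alpha > s) \le s^{-q} \E X_\alpha^q \le C s^{-q},
\]
so $\sup_\alpha (1 - F_\alpha(s)) \le \min\{1, C s^{-q}\}$. Substituting $s = t^{1/p}$ gives $\sup_\alpha (1 - F_\alpha(t^{1/p})) \le \min\{1, C t^{-q/p}\}$. Since $q/p > 1$, the function $t \mapsto \min\{1, C t^{-q/p}\}$ is integrable on $[0,\infty)$: it is bounded by $1$ on a bounded interval near the origin, and decays like $t^{-q/p}$ with exponent strictly greater than $1$ at infinity. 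Hence~\eqref{pro:BoundStrongIntegrable3} holds, and the conclusion follows.

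There is no real obstacle here; the only point requiring a moment's care is ensuring the exponent $q/p$ in the tail bound is genuinely larger than $1$ so that the integral converges at infinity, which is exactly the hypothesis $p < q$. (One could alternatively phrase the argument without invoking~\eqref{pro:BoundStrongIntegrable3}: let $F(t) = \inf_\alpha F_\alpha(t)$, note $1 - F(t) \le C t^{-q}$ for $t$ large, use Lemma~\ref{the:CDF} to realize $F$ as the distribution function of a finite positive $Z$ with $X_\alpha \lest Z$ for all $\alpha$, and compute $\E Z^p = \int_0^\infty (1 - F(t^{1/p}))\,dt < \infty$. This is essentially the same computation routed through the proof of Proposition~\ref{the:BoundStrongIntegrable} rather than its statement.)
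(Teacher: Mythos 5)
Your proof is correct and follows essentially the same route as the paper: Markov's inequality applied to $X_\alpha^q$ gives the tail bound $\sup_\alpha(1-F_\alpha(t^{1/p})) \le \min\{1, C t^{-q/p}\}$, which is integrable since $q/p>1$, and Proposition~\ref{the:BoundStrongIntegrable}\eqref{pro:BoundStrongIntegrable3} finishes the argument. (Your $\min$ is the correct form; the paper's displayed bound reads $\max(1, c t^{-q/p})$, which is evidently a typo since that majorant is not integrable.)
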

\begin{proof}
Markov's inequality implies that
\[
 1 - F_\alpha(t^{1/p}) = \pr( |X_\alpha|^{q} > t^{q/p} ) \le t^{-q/p} \E |X_\alpha|^q
\]
for all $\alpha$. Therefore,
\[
 \sup_\alpha (1 - F_\alpha(t^{1/p})) \le \max(1, c t^{-q/p}),
\]
where $c = \sup_\alpha \E |X_\alpha|^q$. Because the right side above is integrable, the claim
follows by Proposition~\ref{the:BoundStrongIntegrable}. \end{proof}

\section{Summary}
\label{sec:Summary}

This section concludes the paper with a diagram (Figure~\ref{fig:key}) which summarizes the relationships
between the various stochastic boundedness properties. To illustrate how
these concepts relate to compactness properties of probability measures, let us recall the definitions
of Prohorov and Wasserstein metrics.

The \emph{Prohorov metric} on the space $M$ of probability measures on $\R^d$ is defined by
\[
 d_P(\mu,\nu)
 = \inf \left\{ \epsilon > 0: \mu(B) \le \nu(B^\epsilon) + \epsilon
 \ \text{and} \
 \nu(B) \le \mu(B^\epsilon) + \epsilon
 \ \text{for all} \ B \in \mB
 \right\},
\]
where $B^\epsilon = \{x \in \R^d: |x-b| < \epsilon \ \text{for some} \ b \in B\}$ denotes the $\epsilon$-neighborhood
of $B$, and $\mB$ denotes the Borel sets of $\R^d$. The space $(M,d_P)$ is a complete separable metric space, and convergence
with respect to $d_P$ corresponds to convergence in distribution (e.g.\ \cite[Thm.~3.1.7]{Ethier_Kurtz_1986}).

For $p \ge 1$, denote by $M_p$ the space of probability measures on $\R^d$ with a finite $p$-th moment.
The $\emph{$p$-Wasserstein metric}$ on $M_p$ is defined by
\[
 d_{W,p}(\mu,\nu) = \left( \inf_{\gamma \in K(\mu,\nu)} \int_{\R^d \times \R^d} |x-y|^p \, \gamma(dx,dy) \right)^{1/p},
\]
where $K(\mu,\nu)$ is the set probability measures on $\R^d \times \R^d$ with first marginal $\mu$
and second marginal $\nu$. The space $(M_p,d_{W,p})$ is a complete separable metric space, and a
sequence converges with respect to $d_{W,p}$ if and only if it is uniformly $p$-integrable and
converges in distribution \cite[Prop.~7.1.5]{Ambrosio_Gigli_Savare_2008}. See Rachev and
Rüschendorf~\cite{Rachev_Ruschendorf_1998_I} for a detailed account of dual characterizations of
$d_{W,p}$, and Gibbs and Su~\cite{Gibbs_Su_2002} for a nice survey of different probability
metrics and their relationships.

\begin{theorem}
\label{the:summary} For any $0<p<1<q<\infty$ and for any family of random variables $\{X_\alpha\}$ in $\R^d$ with probability
distributions $\mu_\alpha = \pr \circ X_\alpha^{-1}$, the
implications in Figure~\ref{fig:key} are valid. In general, no other implications hold.
\end{theorem}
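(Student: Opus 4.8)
The plan is to treat the two halves of the statement separately: (a) every arrow of Figure~\ref{fig:key} is a valid implication for any family $\{X_\alpha\}$, and (b) whenever two of the listed properties are not joined by a directed path in the figure, some family has the first property but not the second. In both halves one reduces to positive real-valued random variables, since all the properties in question, together with relative compactness in $d_P$ and $d_{W,p}$, depend on $\{X_\alpha\}$ only through the distributions of the norms $|X_\alpha|$.

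For half~(a), most arrows are already established: the equivalences along the increasing-convex line are Theorem~\ref{the:ICXBound}; the coincidence of st- and icx-boundedness by a $q$-integrable variable is Theorem~\ref{the:IcxImpliesSt}; st-boundedness by a finite, respectively $p$-integrable, variable is characterized by Propositions~\ref{the:BoundStrongFinite} and~\ref{the:BoundStrongIntegrable}; and the arrow leaving boundedness in $L^q$ is Proposition~\ref{the:PowerBoundedImpliesStBounded}. The remaining arrows need only short standard arguments, such as: since $x\mapsto x\,1(x>t)$ and $x\mapsto x^q 1(x>t)$ are increasing on $\R_+$, st-boundedness by an integrable variable implies uniform integrability and st-boundedness by a $q$-integrable variable implies uniform $q$-integrability; applying Markov's inequality to $F=\inf_\alpha F_\alpha$ together with Lemma~\ref{the:CDF} shows that boundedness in $L^q$ implies st-boundedness by an integrable variable and, since $p<1$, that boundedness in $L^1$ implies st-boundedness by a $p$-integrable variable; uniform $r$-integrability implies boundedness in $L^r$ (truncating at a large threshold), whence tightness by Markov; and the three compactness equivalences follow from Prohorov's theorem and the cited description of $d_{W,p}$-convergence, using the previous point for the tightness implicit in $d_{W,p}$-compactness when $p\ge 1$.

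For half~(b), I would note that the implications in Figure~\ref{fig:key} form essentially a single chain, so it is enough to give, for each consecutive pair, one family having the weaker but not the stronger property; the rest follows by transitivity. Most of these are two-valued families, $\pr(X_n=n)=p_n$, $\pr(X_n=0)=1-p_n$: with $p_n=1/n$ the family is bounded in $L^1$ and tight but not uniformly integrable; with $p_n=n^{-q}$ it is bounded in $L^q$ but not uniformly $q$-integrable, hence by Theorem~\ref{the:IcxImpliesSt} not icx-bounded by a $q$-integrable variable; with $p_n=n^{-(1+p)/2}$ it is st-bounded by a $p$-integrable variable (Proposition~\ref{the:BoundStrongIntegrable}) but not bounded in $L^1$; with $p_n=n^{-(1+q)/2}$ it is st-bounded by an integrable variable but not bounded in $L^q$; and with $p_n=(\log n)^{-1}$, $n\ge2$, it is tight but neither uniformly $p$-integrable nor st-bounded by a $p$-integrable variable. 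Finally, the variable $Y$ with $\pr(Y>t)=\min(1,1/t)$ has $\E(Y-t)_+=\infty$ for every $t$, so it icx-dominates every positive variable while being $p$-integrable for $p<1$; thus the deterministic family $X_n=n$ shows that icx-boundedness by a $p$-integrable variable does not imply tightness.

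The step I expect to be the main obstacle is showing that uniform integrability does \emph{not} imply st-boundedness by an integrable random variable---and, composing with $x\mapsto x^{1/p}$ and $x\mapsto x^{1/q}$ and invoking Proposition~\ref{the:BoundStrongIntegrable} (with Theorem~\ref{the:IcxImpliesSt} for the icx version), the analogous failures one level below on the diagram. No scalar sequence keeping its mass at a single scale $t\asymp n$ can do this, because uniform integrability forces $\sup_\alpha\int_t^\infty(1-F_\alpha(s))\,ds\to0$ while failure of st-boundedness needs $\int_0^\infty\sup_\alpha(1-F_\alpha(s))\,ds=\infty$; the resolution is to spread the mass over geometric scales. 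Concretely, let $X_j$ equal $2^j$ with probability $(j\,2^j)^{-1}$ and $0$ otherwise. Then $\E X_j 1(X_j>t)=j^{-1}1(2^j>t)$, so $\sup_j\E X_j 1(X_j>t)=(\lfloor\log_2 t\rfloor+1)^{-1}$ for $t\ge2$ and $\{X_j\}$ is uniformly integrable; but $\sup_j\pr(X_j>s)$ equals $(j\,2^j)^{-1}$ at the least $j$ with $2^j>s$, which is of order $(s\log s)^{-1}$, so $\int_0^\infty\sup_j\pr(X_j>s)\,ds=\infty$ and by Lemma~\ref{the:CDF} no integrable variable st-dominates $\{X_j\}$. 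Combining these families with half~(a) leaves only a finite list of pairs to verify, and one concludes that Figure~\ref{fig:key} is exact.
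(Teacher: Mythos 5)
Your overall strategy is the same as the paper's: cite Theorems~\ref{the:ICXBound} and~\ref{the:IcxImpliesSt} and Propositions~\ref{the:BoundStrongFinite}--\ref{the:PowerBoundedImpliesStBounded} for the side equivalences, observe that the downward arrows are easy, and then exploit the chain structure so that one counterexample per consecutive pair suffices. Your counterexamples are two-point families where the paper uses $\phi_n$, $\psi_n$, $e^{1/U}$, $U^{-1}$, $U^{-1/q}$ and their powers, but they are interchangeable; in particular your key family $X_j=2^j$ with probability $(j2^j)^{-1}$ is just the paper's $\psi_n$ ($\psi_n=n$ w.p. $(n\log n)^{-1}$) restricted to $n=2^j$. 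Two small corrections and one small hole. First, your aside that ``no scalar sequence keeping its mass at a single scale $t\asymp n$ can do this'' is false --- $\psi_n$ places all its mass at the single point $n$ and works, and your own $X_j$ places all its mass at the single point $2^j$; the heuristic you state does not actually force geometric spacing, only that $np_n\to 0$ while $\sum_n p_n=\infty$, which $p_n=(n\log n)^{-1}$ achieves. Second, of the nine consecutive separations your list covers eight: the pair ``bounded in $L^p$ but not uniformly $p$-integrable'' is never exhibited (your $(\log n)^{-1}$ family is not bounded in $L^p$, and your composition remark is tied to the UI-versus-st-bounded separation). The obvious analogue $p_n=n^{-p}$ of your $p_n=n^{-q}$ example fills it, so this is an omission rather than a conceptual gap, but the closing sentence ``leaves only a finite list of pairs to verify'' does not discharge it. Third, you do not address the two right-hand equivalences ``bounded in the $r$-Wasserstein metric $\Leftrightarrow$ bounded in $L^r$''; the paper's one-line argument is that $\|X_\alpha\|_{L^r}=d_{W,r}(\mu_\alpha,\delta_0)$. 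Your extra observation about the random variable with $\pr(Y>t)=\min(1,1/t)$ --- that icx-boundedness by a $p$-integrable variable is vacuous for $p<1$ --- is correct and explains why that node is absent from the figure, but it is not needed for the theorem as stated.
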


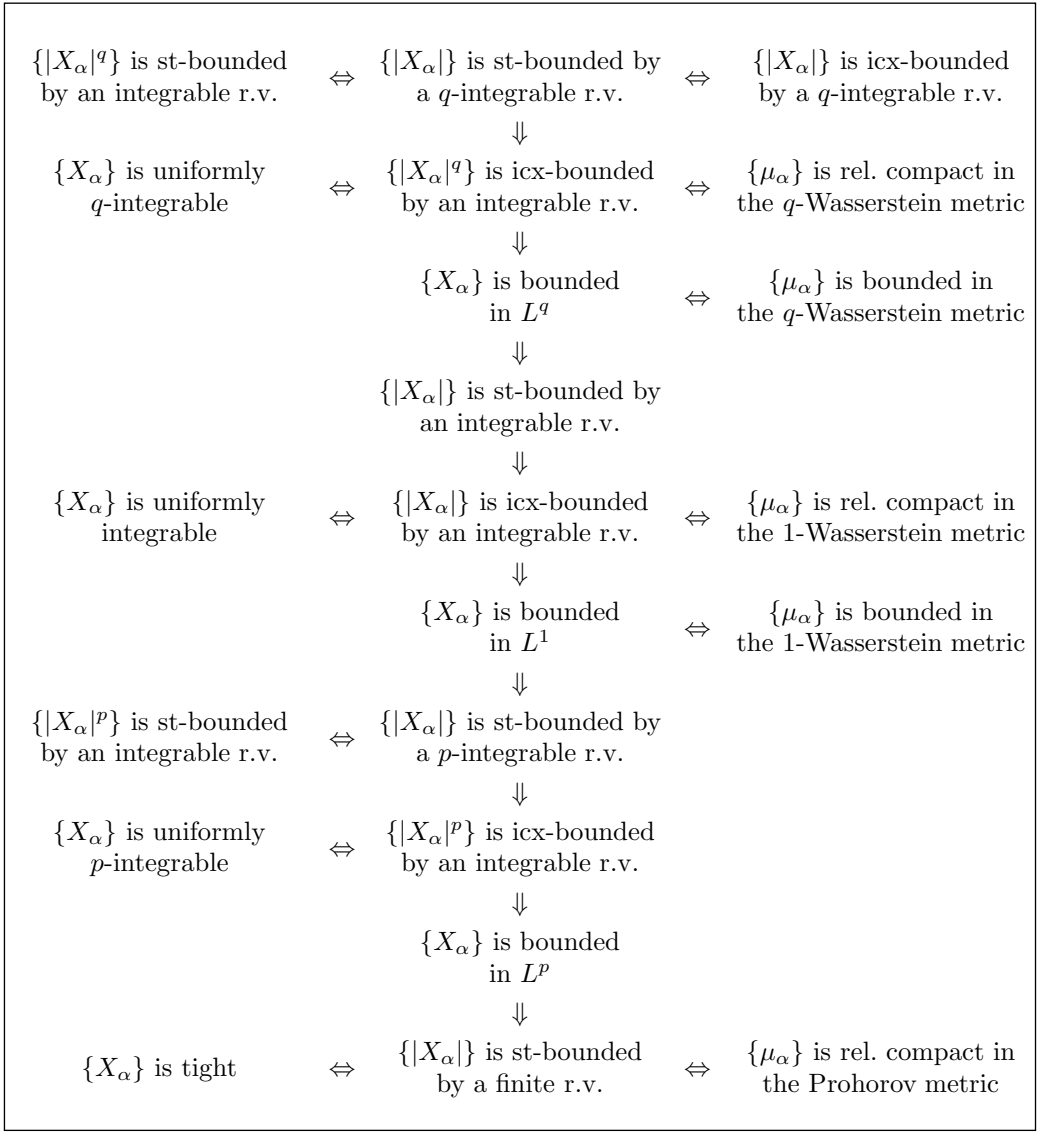
\begin{figure}[h!]
  \begin{center}
  \small
  \psset{unit=.73cm}  
  \begin{pspicture}(-9,-1)(+9,19.5)
  \psframe[linewidth=.5pt,framearc=0.00](-9.3,-1)(+9.3,19.5)

  \rput[B](0,18){\begin{tabular}{c} $\{ |X_\alpha| \}$ is st-bounded by \\ a $q$-integrable r.v. \end{tabular}}
  \rput[B](0,16){\begin{tabular}{c} $\{ |X_\alpha|^q \}$ is icx-bounded \\ by an integrable r.v. \end{tabular}}
  \rput[B](0,14){\begin{tabular}{c} $\{X_\alpha\}$ is bounded \\ in $L^q$ \end{tabular}}
  \rput[B](0,12){\begin{tabular}{c} $\{|X_\alpha|\}$ is st-bounded by \\ an integrable r.v. \end{tabular}}
  \rput[B](0,10){\begin{tabular}{c} $\{ |X_\alpha| \}$ is icx-bounded \\ by an integrable r.v. \end{tabular}}
  \rput[B](0,08){\begin{tabular}{c} $\{ X_\alpha \}$ is bounded \\ in $L^1$ \end{tabular}}
  \rput[B](0,06){\begin{tabular}{c} $\{|X_\alpha|\}$ is st-bounded by \\ a $p$-integrable r.v. \end{tabular}}
  \rput[B](0,04){\begin{tabular}{c} $\{ |X_\alpha|^p \}$ is icx-bounded \\ by an integrable r.v. \end{tabular}}
  \rput[B](0,02){\begin{tabular}{c} $\{X_\alpha\}$ is bounded \\ in $L^p$ \end{tabular}}
  \rput[B](0,00){\begin{tabular}{c} $\{|X_\alpha|\}$ is st-bounded \\ by a finite r.v. \end{tabular}}

  \rput[B](-6.5,18){\begin{tabular}{c} $\{ |X_\alpha|^q \}$ is st-bounded \\ by an integrable r.v. \end{tabular}}
  \rput[B](+6.5,18){\begin{tabular}{c} $\{ |X_\alpha| \}$ is icx-bounded \\ by a $q$-integrable r.v. \end{tabular}}
  \rput[B](-3.2,18){$\Leftrightarrow$}
  \rput[B](+3.2,18){$\Leftrightarrow$}

  \rput[B](-6.5,16){\begin{tabular}{c} $\{ X_\alpha \}$ is uniformly \\ $q$-integrable \end{tabular}}
  \rput[B](-3.2,16){$\Leftrightarrow$}
  \rput[B](+6.5,16){\begin{tabular}{c} $\{ \mu_\alpha \}$ is rel.\ compact in \\ the $q$-Wasserstein metric \end{tabular}}
  \rput[B](+3.2,16){$\Leftrightarrow$}

  \rput[B](+6.5,14){\begin{tabular}{c} $\{ \mu_\alpha \}$ is bounded in \\ the $q$-Wasserstein metric \end{tabular}}
  \rput[B](+3.2,14){$\Leftrightarrow$}

  \rput[B](-6.5,10){\begin{tabular}{c} $\{ X_\alpha \}$ is uniformly \\ integrable \end{tabular}}
  \rput[B](-3.2,10){$\Leftrightarrow$}
  \rput[B](+6.5,10){\begin{tabular}{c} $\{ \mu_\alpha \}$ is rel.\ compact in \\ the 1-Wasserstein metric \end{tabular}}
  \rput[B](+3.2,10){$\Leftrightarrow$}

  \rput[B](+6.5,08){\begin{tabular}{c} $\{ \mu_\alpha \}$ is bounded in \\ the $1$-Wasserstein metric \end{tabular}}
  \rput[B](+3.2,08){$\Leftrightarrow$}

  \rput[B](-6.5,06){\begin{tabular}{c} $\{ |X_\alpha|^p \}$ is st-bounded \\ by an integrable r.v. \end{tabular}}
  \rput[B](-3.2,06){$\Leftrightarrow$}

  \rput[B](-6.5,04){\begin{tabular}{c} $\{ X_\alpha \}$ is uniformly \\ $p$-integrable \end{tabular}}
  \rput[B](-3.2,04){$\Leftrightarrow$}

  \rput[B](+6.5,00){\begin{tabular}{c} $\{ \mu_\alpha \}$ is rel.\ compact in \\ the Prohorov metric \end{tabular}}
  \rput[B](+3.2,00){$\Leftrightarrow$}

  \rput[B](-6.5,00){\begin{tabular}{c} $\{ X_\alpha \}$ is tight \end{tabular}}
  \rput[B](-3.2,00){$\Leftrightarrow$}

  \multirput[B](0,17)(0,-2){9}{$\Downarrow$}
  \end{pspicture}
  \end{center}
  \caption{\label{fig:key} Stochastic boundedness relationships.}
\end{figure}

\begin{proof}
Let us start by verifying the equivalences on the left of Figure~\ref{fig:key}, starting from top.
The first and fourth and due to Proposition~\ref{the:BoundStrongIntegrable}. The second, third,
and fifth are due to Theorem~\ref{the:ICXBound}. The sixth is due to
Proposition~\ref{the:BoundStrongFinite}.

Let us next verify the equivalences on the right, again starting from the top. The first is due to
Theorem~\ref{the:IcxImpliesSt}. By applying Ambrosio, Gigli, and Savaré
\cite[Prop.~7.1.5]{Ambrosio_Gigli_Savare_2008}, we know that $\{\mu_\alpha\}$ is relatively
compact in the $q$-Wasserstein metric if and only if $\{X_\alpha\}$ is uniformly $q$-integrable
and tight. Because closed bounded sets are compact in $\R^d$, it follows that uniform
$q$-integrability implies tightness. Therefore, the second equivalence on the right holds. The
third equivalence follows by observing that the $L^q$-norm of $X_\alpha$ is equal to the
$q$-Wasserstein distance between $\mu_\alpha$ and the Dirac measure at zero. The fourth and fifth
equivalences are verified by similar reasoning. For the sixth, it suffices to recall that
tightness and relative compactness in the Prohorov metric are equivalent by Prohorov's classic
theorem (e.g.~\cite[Thm 16.3]{kallenberg2002}).

The downward implications are more or less immediate. Starting from top, the first, fourth, and
seventh are trivial, because strong ordering implies increasing convex ordering. The second,
fifth, and eighth are trivial as well, because increasing convex ordering implies the ordering of
the means. The third, sixth, and ninth are due to
Proposition~\ref{the:PowerBoundedImpliesStBounded}.

To complete the picture, we will now construct families of random variables which show that none of the one-way
implications in Figure~\ref{fig:key} can be reversed in general. Let $U$ be a uniformly
distributed random variable in $(0,1)$, and let $\phi_n$ and $\psi_n$, $n \ge 2$, be random
variables such that
\[
 \phi_n = \begin{cases}
   \text{$n$ with probability $n^{-1}$}, \\
   \text{$0$ else},
 \end{cases}
 \quad
 \psi_n = \begin{cases}
   \text{$n$ with probability $(n \log n)^{-1}$}, \\
   \text{$0$ else}.
 \end{cases}
\]
Then for any $0 < p < 1 < q$:
\begin{itemize}
  \item $\{e^{1/U}\}$ is st-bounded by a finite r.v.\ but not bounded in $L^p$.
  \item $\{\phi_n^{1/p}\}$ is bounded in $L^p$ but not uniformly $p$-integrable.
  \item $\{\psi_n^{1/p}\}$ is uniformly $p$-integrable but not st-bounded by a r.v.\ in $L^p$.
  \item $\{U^{-1}\}$ is st-bounded by a r.v.\ in $L^p$ but not bounded in $L^1$. \phantom{$\phi_n^{1/p}$}
  \item $\{\phi_n\}$ is bounded in $L^1$ but not uniformly integrable. \phantom{$\phi_n^{1/p}$}
  \item $\{\psi_n\}$ is uniformly integrable but not st-bounded by an integrable r.v. \phantom{$\phi_n^{1/p}$}
  \item $\{U^{-1/q}\}$ is st-bounded by an integrable r.v.\ but not bounded in $L^q$. \phantom{$\phi_n^{1/p}$}
  \item $\{\phi_n^{1/q}\}$ is bounded in $L^q$ but not uniformly $q$-integrable. \phantom{$\phi_n^{1/p}$}
  \item $\{\psi_n^{1/q}\}$ is uniformly $q$-integrable but not st-bounded by a r.v.\ in $L^q$. \phantom{$\phi_n^{1/p}$}
\end{itemize}
The straightforward computations required for verifying the above claims are left to the reader.
\end{proof}

\appendix

\section{Distribution functions}
\label{sec:DF}

If $X$ is a finite positive random variable, then its distribution function $F(t) = \pr(X \le t)$
is increasing, right-continuous, and satisfies $F(0) \ge 0$ and $\lim_{t \to \infty} F(t) = 1$.
Recall that any function $F$ with these properties can be realized as the distribution function of
the random variable $X = F^{-1}(U)$, where $F^{-1}(u) = \inf\{t \in \R_+: F(t) \ge u\}$ and $U$ is
a uniformly distributed random variable in $(0,1)$.

\begin{lemma}
\label{the:CDF}
For any family $\{F_\alpha\}$ of distribution functions on $\R_+$, the function $F(t) =
\inf_\alpha F_\alpha(t)$ is a distribution function on $\R_+$ if and only if
\begin{equation}
 \label{eq:Tight}
 \lim_{t \to \infty} \inf_\alpha F_\alpha(t) = 1.
\end{equation}
\end{lemma}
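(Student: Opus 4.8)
The plan is to verify that $F=\inf_\alpha F_\alpha$ satisfies the defining properties of a distribution function on $\R_+$ recalled at the beginning of this appendix — that it is increasing, right-continuous, positive, and tends to $1$ at infinity — and then to invoke the recalled fact that any function with these properties is the distribution function of $F^{-1}(U)$ for a uniform $U$. The equivalence then follows painlessly: the forward direction is just the definition of a distribution function, and the backward direction is exactly this verification.

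First I would dispose of the trivial implication: if $F$ is a distribution function on $\R_+$, then $\lim_{t\to\infty}F(t)=1$ holds by definition, which is precisely \eqref{eq:Tight}. Conversely, assuming \eqref{eq:Tight}, I would check the three easy properties. Each $F_\alpha$ takes values in $[0,1]$, hence so does $F$, and in particular $F(0)\ge 0$; for $0\le s\le t$ one has $F(s)\le F_\alpha(s)\le F_\alpha(t)$ for every $\alpha$, and taking the infimum over $\alpha$ on the right gives $F(s)\le F(t)$, so $F$ is increasing; and $\lim_{t\to\infty}F(t)=1$ is nothing but the hypothesis \eqref{eq:Tight}, the limit existing because $F$ is increasing and bounded by $1$.

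The only step with any content — and the place I expect to have to be careful — is right-continuity, because an infimum of right-continuous functions need not be right-continuous in general; monotonicity is what rescues us. I would argue pointwise. Fix $t\ge 0$ and $\epsilon>0$. By definition of the infimum there is an index $\alpha$ with $F_\alpha(t)<F(t)+\epsilon$, and by right-continuity of that single $F_\alpha$ there is $\delta>0$ such that $F_\alpha(s)<F_\alpha(t)+\epsilon$ for all $s\in[t,t+\delta)$. For such $s$, monotonicity of $F$ gives
\[
 F(t)\ \le\ F(s)\ \le\ F_\alpha(s)\ <\ F_\alpha(t)+\epsilon\ <\ F(t)+2\epsilon ,
\]
so $|F(s)-F(t)|<2\epsilon$ on $[t,t+\delta)$, which is right-continuity at $t$. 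Having established all four properties, I would conclude that $F$ is the distribution function of the finite positive random variable $F^{-1}(U)$, as recalled above, completing the proof.
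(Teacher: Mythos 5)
Your argument is correct and is essentially identical to the paper's proof: the forward direction is immediate from the definition, and for the converse the only nontrivial point is right-continuity, which you handle exactly as the paper does — pick $\alpha$ nearly attaining the infimum at $t$, use right-continuity of that single $F_\alpha$, and squeeze $F$ between $F(t)$ and $F(t)+2\epsilon$ using monotonicity. The only cosmetic difference is your use of $\epsilon$ and $2\epsilon$ where the paper uses $\epsilon/2$ and $\epsilon$.
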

\begin{proof}
The necessity of~\eqref{eq:Tight} is obvious. To prove sufficiency, assume that a family of
distribution functions $\{F_\alpha\}$ on $\R_+$ satisfies~\eqref{eq:Tight}. Then it immediately
follows that $F$ is increasing, $F(0) \ge 0$, and $\lim_{t\to\infty} F(t) = 1$. Therefore, we only
need to verify that $F$ is right-continuous. Fix arbitrary $t \ge 0$ and $\epsilon > 0$, and
choose an index $\alpha$ such that $F_\alpha(t) < F(t) + \epsilon/2$. By the right-continuity of
$F_\alpha$ there exists a $\delta > 0$ such that $F_\alpha(t+h) - F_\alpha(t) < \epsilon/2$ for
all $h \in (0,\delta)$. As a consequence,
\[
 F(t+h) \le F_\alpha(t+h) < F_\alpha(t) + \epsilon/2 < F(t) + \epsilon
\]
for all $h \in (0,\delta)$. Because $F$ is increasing, this implies that $F$ is right-continuous.
\end{proof}

\section{Integrated survival functions}
\label{sec:ISF}

The integrated survival function of a positive integrable random variable $X$ with distribution
function $F$ is defined by
\[
 H(t) = \E (X-t)_+
 = \int_t^\infty (1-F(u)) \, du.
\]
The following result characterizes the family of integrated survival functions generated by
positive integrable random variables. The proof is similar to an analogous characterization for
random variables on the full real line (Müller and Stoyan~\cite[Thm.~1.5.10]{Muller_Stoyan_2002}).
Because our formulation of the result is slightly different, we include the proof here for the
reader's convenience.

\begin{lemma}
\label{the:IFS}
A function $H: \R_+ \to \R_+$ is the integrated survival function of a positive integrable random
variable if and only if
\begin{enumerate}[(i)]
\item \label{pro:IFSConvex}   $H$ is convex,
\item \label{pro:IFSLimit}    $\lim_{t \to \infty} H(t) = 0$,
\item \label{pro:IFSBoundary} $H(t) + t \ge H(0)$ for all $t \ge 0$.
\end{enumerate}
\end{lemma}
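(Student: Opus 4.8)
The plan is to prove the two implications separately; the necessity is routine, while the sufficiency requires reconstructing the survival function from $H$.

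\emph{Necessity.} Suppose $H(t) = \E(X-t)_+$ for a positive integrable $X$ with distribution function $F$. Then $H$ is finite, since $H(t) \le \E X < \infty$, and nonnegative, since $(X-t)_+ \ge 0$. Condition (i) follows because $t \mapsto (x-t)_+$ is the pointwise maximum of $t \mapsto x-t$ and $0$, hence convex for each fixed $x$, and expectation preserves convexity. Condition (ii) follows by dominated convergence, as $(X-t)_+ \downarrow 0$ pointwise as $t \to \infty$ and is dominated by the integrable variable $X$. Condition (iii) follows from $(X-t)_+ \ge X - t$ together with $X \ge 0$, which gives $H(t) \ge \E X - t = H(0) - t$.

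\emph{Sufficiency.} Assume (i)--(iii). A convex nonnegative function vanishing at infinity is necessarily nonincreasing (were it increasing on some interval, convexity would force $H(t) \to \infty$), so its right derivative $H'_+$ --- which exists everywhere, is nondecreasing, and is right-continuous, all by convexity --- satisfies $H'_+ \le 0$. Dividing the inequality in (iii) by $t$ and letting $t \downarrow 0$ gives $H'_+(0) \ge -1$, and since $H'_+$ is nondecreasing this yields $-1 \le H'_+ \le 0$ on all of $\R_+$. In particular $H$ is $1$-Lipschitz, hence absolutely continuous, with $H(s) - H(t) = \int_t^s H'_+(u)\,du$ for $0 \le t \le s$. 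Now set $F(t) = 1 + H'_+(t)$. Then $F$ is nondecreasing, right-continuous, and $F(0) = 1 + H'_+(0) \ge 0$; moreover $H'_+$ is integrable on $\R_+$ (its integral equals $-H(0)$) and monotone, so $H'_+(t) \to 0$, i.e. $F(t) \to 1$, as $t \to \infty$. Thus $F$ is a distribution function on $\R_+$, realized as the law of $X = F^{-1}(U)$ with $U$ uniform on $(0,1)$, as recalled at the start of Appendix~\ref{sec:DF}. Letting $s \to \infty$ in $H(t) - H(s) = -\int_t^s H'_+(u)\,du$ and using (ii) gives
\[
 \int_t^\infty (1 - F(u))\,du = -\int_t^\infty H'_+(u)\,du = H(t),
\]
so $H$ is the integrated survival function of $X$; taking $t = 0$ shows $\E X = H(0) < \infty$, so $X$ is integrable.

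I expect the only genuinely delicate points to be the standard one-sided-derivative facts for convex functions (existence, monotonicity, and right-continuity of $H'_+$, together with the fundamental theorem of calculus $H(s) - H(t) = \int_t^s H'_+$) and, above all, the observation that hypothesis (iii) is precisely what forces $F(0) = 1 + H'_+(0) \ge 0$, i.e.\ what guarantees that the reconstructed random variable is positive. Securing that bound and carrying the identity $\int_t^\infty (1-F) = H$ all the way down to $t = 0$ are the main things to get right; the remaining verifications are bookkeeping.
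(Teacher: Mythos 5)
Your proof is correct and follows essentially the same route as the paper: necessity by the elementary pointwise inequalities and dominated convergence, and sufficiency by reconstructing the distribution function as $F = 1 + H'_+$ and integrating back. The only cosmetic differences are that you extract the bound $H'_+ \ge -1$ directly from the difference quotient in (iii) (the paper instead notes continuity at zero and uses (iii) only for $F(0)\ge 0$) and that you justify $F(t)\to 1$ via integrability of the monotone function $H'_+$; both are sound.
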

\begin{proof}
Assume first that $H$ is the integrated survival function of a positive integrable random variable
$X$. Then \eqref{pro:IFSConvex} follows immediately because $t \mapsto (x-t)_+$ is convex for all
$x$, and \eqref{pro:IFSLimit} follows by dominated convergence. To see the validity
of~\eqref{pro:IFSBoundary}, it suffices to observe that
\[
 H(t) + t = \E \max(X,t) \ge \E X = H(0).
\]

Assume next that a function $H: \R_+ \to \R_+$ satisfies
\eqref{pro:IFSConvex}--\eqref{pro:IFSBoundary}. The convexity of $H$ together with
\eqref{pro:IFSLimit} implies that $H$ is decreasing on $\R_+$. By applying \eqref{pro:IFSBoundary}
we see that $-t \le H(t) - H(0) \le 0$ for all $t$, which implies that $H$ is continuous at zero.
As a consequence, the right derivative $H'_+(t)$ of $H$ exists for all $t \ge 0$, and the function
$H'_+: \R_+ \to \R$ is right-continuous and increasing (e.g.\
\cite[Prop.~4.1.1]{Bertsekas_Nedic_Ozdaglar_2003}). Define $F(t) = 1 + H'_+(t)$. Then $F(0) \ge 0$
due to~\eqref{pro:IFSBoundary}, and $\lim_{t \to \infty} F(t) = 1$, because $H(t)$ decreases to
zero as $t \to \infty$. Because $F$ is right-continuous and increasing, we conclude that $F$ is
the distribution function of the random variable $X = F^{-1}(U)$, where $U$ is uniformly
distributed in $(0,1)$. To see that $H$ is the integrated survival function of $X$, we note by
changing the order of integration that for all $t$,
\[
 H(t)
 = - \int_t^\infty H_+'(u) \, du
 = \int_t^\infty (1-F(u)) \, du
 = \E (X-t)_+.
\]
Especially, $\E X = H(0)$ is finite, which shows that $X$ is integrable. \end{proof}

\begin{lemma}
\label{the:MomentIFS}
For any integrable positive random variable $X$ and for any $p>1$,
\[
 \E X^p
 = p \int_0^\infty H(t^{1/(p-1)}) \, dt,
\]
where $H(t) = \E(X-t)_+$.
\end{lemma}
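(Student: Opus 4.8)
The plan is to write both sides as iterated integrals and exchange the order of integration using Tonelli's theorem, which applies because every integrand in sight is nonnegative and jointly measurable. Starting from the right-hand side, I would use $H(t^{1/(p-1)}) = \E(X - t^{1/(p-1)})_+$ to get
\[
 p \int_0^\infty H(t^{1/(p-1)})\, dt = p\, \E \int_0^\infty \bigl(X - t^{1/(p-1)}\bigr)_+ \, dt,
\]
the interchange of $\E$ and $\int_0^\infty dt$ being legitimate by Tonelli. All quantities here take values in $[0,\infty]$, so the identity will be asserted in $[0,\infty]$; the hypothesis that $\E X < \infty$ is used only to ensure that $H$ is real-valued (if $X$ fails to be $p$-integrable, both sides are simply $+\infty$).

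Next I would evaluate the inner integral for a fixed value $X = x \ge 0$. The integrand $(x - t^{1/(p-1)})_+$ is positive exactly when $t < x^{p-1}$, and on that range a direct computation gives
\[
 \int_0^{x^{p-1}} \bigl(x - t^{1/(p-1)}\bigr)\, dt
 = x\cdot x^{p-1} - \frac{p-1}{p}\,x^{p}
 = \frac{1}{p}\, x^{p},
\]
where I used $\int_0^{x^{p-1}} t^{1/(p-1)}\, dt = \frac{p-1}{p}\,(x^{p-1})^{p/(p-1)} = \frac{p-1}{p}\,x^{p}$. Substituting this back and pulling the constant $\frac1p$ through the expectation yields $p\,\E\bigl(\frac{1}{p}X^p\bigr) = \E X^p$, which is the claim.

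As an alternative to the pointwise evaluation, one can substitute $s = t^{1/(p-1)}$ (so $t = s^{p-1}$, $dt = (p-1)s^{p-2}\,ds$), turning the right-hand side into $p(p-1)\int_0^\infty s^{p-2} H(s)\, ds$, then insert $H(s) = \int_s^\infty (1-F(u))\, du$ and swap the $s$- and $u$-integrals; the inner integral $\int_0^u s^{p-2}\, ds = u^{p-1}/(p-1)$ reproduces the standard layer-cake formula $\E X^p = p\int_0^\infty u^{p-1}(1-F(u))\, du$. I do not expect a genuine obstacle: the only thing requiring care is the bookkeeping of the exponents $1/(p-1)$ and $p/(p-1)$, and noting that it is precisely the assumption $p>1$ that makes $t^{1/(p-1)}$ well defined and the exponent arithmetic close up.
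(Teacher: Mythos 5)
Your proof is correct. The arithmetic checks out: the integrand $(x-t^{1/(p-1)})_+$ vanishes for $t\ge x^{p-1}$, and $\int_0^{x^{p-1}} t^{1/(p-1)}\,dt = \frac{p-1}{p}x^p$ indeed gives $\int_0^\infty (x-t^{1/(p-1)})_+\,dt = \frac1p x^p$, after which Tonelli (valid since everything is nonnegative) finishes the job. Your route is organized differently from the paper's: the paper starts from the left-hand side, writes $\E X^p = p\int_0^\infty \pr(X>u)\,u^{p-1}\,du$ via the layer-cake formula and a change of variables, then represents the weight $u^{p-1}$ as $\int_0^\infty 1(t^{1/(p-1)}<u)\,dt$ and swaps the order of integration to assemble $H(t^{1/(p-1)})=\int_{t^{1/(p-1)}}^\infty \pr(X>u)\,du$. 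You instead start from the right-hand side, use the expectation form $H(s)=\E(X-s)_+$ rather than the survival-function form, and reduce everything to a single Tonelli swap followed by a pointwise antiderivative computation. Your version is slightly more elementary (one interchange instead of a layer-cake identity plus an indicator trick) and makes transparent that the identity holds in $[0,\infty]$ without any integrability hypothesis on $X^p$; the paper's version keeps the survival function $1-F$ in view throughout, which matches how $H$ is used elsewhere in the appendix. Your sketched alternative (substituting $s=t^{1/(p-1)}$ and swapping the $s$- and $u$-integrals) is essentially the paper's argument read in reverse, so you have in effect both proofs.
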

\begin{proof}
Denote $G(u) = \pr(X>u)$. Because $\pr(X^p > u) = G(u^{1/p})$, a change of variables shows that
\[
 \E X^p
 = \int_0^\infty G(u^{1/p}) \, du
 = p \int_0^\infty G(u) u^\epsilon \, du,
\]
where $\epsilon = p-1$. Further, by writing $u^\epsilon = \int_0^\infty 1(t^{1/\epsilon} < u) \,
dt$ and changing the order of integration, we find that
\[
 p \int_0^\infty G(u) u^\epsilon \, du
 = p \int_0^\infty \left( \int_{t^{1/\epsilon}}^\infty G(u) \, du \right) dt
 = p \int_0^\infty H(t^{1/\epsilon}) \, dt.
\]
\end{proof}

\bibliographystyle{abbrv}
\bibliography{lslReferences}

\end{document}